\newtheorem{thm}{Theorem}[section]
\newtheorem{cor}[thm]{Corollary}
\newtheorem{lem}[thm]{Lemma}
\newtheorem{df}[thm]{Definition}
\newtheorem{rmk}[thm]{Remark}
\newcommand{\beq}{\begin{equation} }
\newcommand{\enq}{\end{equation}}
\newcommand{\ra}{\rightarrow}
\newcommand{\longra}{\longrightarrow}
\newcommand{\dra}{\dashrightarrow}
\newcommand{\mC}{\mathbb{C}}
\newcommand{\mG}{\mathbb{G}}
\newcommand{\mP}{\mathbb{P}}
\newcommand{\caO}{{\cal O}}
\DeclareMathOperator{\im}{{\rm im}}
\DeclareMathOperator{\Alb}{{\rm Alb}}
\DeclareMathOperator{\Sec}{{\rm Sec}}
\title{Grassmannian BGG Complex}
\author{}
\date{}
\begin{document}

\title{A generalization of the Castelnuovo-de Franchis inequality}

\author{V\'{\i}ctor Gonz\'{a}lez-Alonso
\thanks{The author completed this work supported by grant FPU-AP2008-01849 of the Spanish Ministerio de Educaci\'{o}n.} \\
\small{Departament de Matem\`atica Aplicada I, Universitat Polit\`ecnica de Catalunya (BarcelonaTech),}\\
\small{Av. Diagonal 647, 08028-Barcelona, Spain.} \\
\small{victor.gonzalez-alonso@upc.edu}
}

\maketitle

\begin{abstract}
In this article we give a lower bound on $h^{2,0}\left(X\right)$, where $X$ is an irregular compact K\"ahler (or smooth complex projective) variety, in terms of the minimal rank of an element in the kernel of $\psi_2: \bigwedge^2 H^0\left(X,\Omega_X^1\right) \ra H^0\left(X,\Omega_X^2\right)$. As a consequence, we obtain a generalization to higher dimensions of the Castelnuovo-de Franchis inequality for surfaces, improving some results of Lazarsfeld and Popa and Lombardi for threefolds and fourfolds.
\end{abstract}

\section{Introduction}

In the classification of higher dimensional algebraic varieties, a first step can be to decide whether the variety admits (or not) a fibration onto a variety of lower dimension. If the answer is positive, then one can reduce the problem to the study of the base and the fibres, which are of lower dimension and, somehow, eaiser than the original variety. Therefore, it is interesting to have any kind of criteria to decide the existence of fibrations whose total space is the given variety, and in particular, it is useful to know conditions on the numerical invariants of the variety (e.g. its Betti, Chern or Hodge numbers) implying that it is (or not) fibred.

A paradigmatical example is the classical Castelnuovo-de Franchis theorem, which says that an irregular surface $S$ admits a fibration onto a curve of genus $g \geq 2$ if and only if there are two holomorphic 1-forms whose wedge is zero. This theorem gives a numerical criterion in the spirit mentioned above: if the geometric genus $p_g\left(S\right)$ and the iregularity $q\left(S\right)$ of the surface satisfy
\beq \label{CdF-ineq-intro}
p_g\left(S\right) \leq 2q\left(S\right)-4,
\enq
then there exist two 1-forms wedging to zero, and therefore the variety is fibred.

The Castelnuovo-de Franchis theorem suggests that, for an irregular variety, its {\em higher irrational pencils} (fibrations analogous to surfaces fibred over curves of genus $g \geq 2$) are closely related to some special property of the algebra of holomorphic differential forms. Following this approach, Catanese \cite{Cat} and Ran \cite{Ran} proved independently a {\em Generalized Castelnuovo-de Franchis theorem}. As a consequence, one obtains that a non-fibred irregular variety $X$ must verify
\beq \label{GCdF-ineq-intro}
h^{k,0}\left(X\right) > k\left(q\left(X\right)-k\right)
\enq
for every $k = 1, \ldots, \dim X$.

A different approach is followed by Green and Lazarsfeld in \cite{GL1,GL2}, where they relate the same class of fibrations to the positive-dimensional components of some {\em cohomological support loci} of the variety. This alternative characterization led to a different generalization of the Castelnuovo-de Franchis inequality (\ref{CdF-ineq-intro}), obtained by Pareschi and Popa in \cite{PP}. The same inequality and some new ones were proven later by Lazarsfeld and Popa in \cite{LP}, using a completely different technique: the BGG complex. Using a similar construction (a BGG complex for the sheaves $\Omega_X^p$ of holomorphic $p$-forms), Lombardi obtained in \cite{Lom} more inequalities involving the Hodge numbers of varieties all whose 1-forms vanish at most at isolated points (a much more restrictive hypothesis than the non-existence of fibrations).

While the BGG complex takes into account only the multiplicative structure of the algebra $\oplus_{p=0}^d H^0\left(X,\Omega_X^p\right)$ of holomorphic forms, in this article we construct a generalization (the {\em Grassmannian BGG complex}) that also captures some of the additive structure. Unfortunately, we have not been able to relate the exactness of our complex to the existence of fibrations, as it was done in \cite{LP} for the BGG complex. However, we have been able to characterize the exactness of the shortest case in terms of the kernel of $\psi_2: \bigwedge^2 H^0\left(X,\Omega_X^1\right) \ra H^0\left(X,\Omega_X^2\right)$ and, as a byproduct, we can prove the following inequality, which generalizes (\ref{GCdF-ineq-intro}) for $k=2$ and improves some of the inequalities proven in \cite{LP} and \cite{Lom}.

\begin{thm}
Let $X$ be an irregular variety without higher irrational pencils. Then it holds
\beq \label{ineq-obj}
h^{2,0}\left(X\right) \geq
\begin{cases}
\binom{q\left(X\right)}{2} & \text{ if } q\left(X\right) \leq 2 \dim X -1 \\
2\left(\dim X-1\right)q\left(X\right) - \binom{2\dim X-1}{2} & \text otherwise
\end{cases}
\enq
\end{thm}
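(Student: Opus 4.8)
The plan is to reduce the inequality to a statement about the minimal rank of a nonzero element in the kernel of $\psi_2\colon \bigwedge^2 H^0(X,\Omega_X^1)\ra H^0(X,\Omega_X^2)$, and then to bound that kernel dimension. First I would invoke the Generalized Castelnuovo-de Franchis theorem of Catanese and Ran: since $X$ has no higher irrational pencils, every nonzero element $\omega\in\ker\psi_2$ must have rank $\geq 3$ as an element of $\bigwedge^2 H^0(X,\Omega_X^1)$ (an element of rank $2$ would be decomposable, $\omega=\alpha_1\wedge\alpha_2$ with $\alpha_1\wedge\alpha_2=0$ in $H^0(X,\Omega_X^2)$, which by classical Castelnuovo-de Franchis produces a pencil over a curve of genus $\geq 2$; more generally a rank-$k$ relation yields a higher irrational pencil). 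So the hypothesis is exactly equivalent to: either $\ker\psi_2=0$, or the minimal rank $r$ of a nonzero element of $\ker\psi_2$ satisfies $r\geq 3$ — and in fact, by the same theorem applied with $k\geq 2$, we may take $r$ as large as we can force it, which is the point where the paper's own characterization of the "shortest case" of the Grassmannian BGG complex enters.

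The second step is a dimension count. Write $q=q(X)=h^{1,0}(X)$ and $h=h^{2,0}(X)$. From the exact sequence defining $\psi_2$ we have $h\geq \dim\im\psi_2 = \binom{q}{2}-\dim\ker\psi_2$, so it suffices to bound $\dim\ker\psi_2$ from above by the appropriate quantity: $0$ when $q\leq 2\dim X-1$, and $\binom{q}{2} - 2(\dim X-1)q + \binom{2\dim X-1}{2} = \binom{q-(2\dim X-1)}{2}$ — wait, let me write it as $\binom{q}{2}-\big(2(\dim X-1)q-\binom{2\dim X-1}{2}\big)$ in general. So the heart of the matter is: \emph{if every nonzero element of a subspace $W\subseteq \bigwedge^2 V$ (with $\dim V=q$) has rank $\geq 2n$ where $n=\dim X$, then $\dim W\leq \binom{q}{2}-2(n-1)q+\binom{2n-1}{2}$, and $W=0$ if $q\leq 2n-1$.} Here I would use the standard stratification of $\bigwedge^2 V$ by rank: the locus of elements of rank $\leq 2m$ is the affine cone over the secant variety $\mathrm{Sec}_{m-1}$ of the Grassmannian $\mathbb{G}(2,q)$ in its Plücker embedding, and its dimension is a classical formula, roughly $2mq - 2m^2$ for the projectivized version. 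Requiring $\mathbb{P}(W)$ to avoid the rank-$\leq(2n-2)$ locus forces $\dim W - 1 + \big(\text{that locus dim}\big) < \binom{q}{2}$ by the projective dimension theorem, which rearranges to exactly the bound claimed; and when $q\leq 2n-1$ the rank-$\leq(2n-2)$ locus is all of $\mathbb{P}(\bigwedge^2 V)$ (since rank is at most $2\lfloor q/2\rfloor\leq 2n-2$), forcing $W=0$.

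I expect the main obstacle to be \emph{justifying that one may take the rank threshold all the way up to $2n = 2\dim X$}, rather than just the $\geq 3$ that the classical Castelnuovo-de Franchis theorem gives directly. The Catanese--Ran theorem bounds the rank of a nonzero element of $\ker\psi_2$ below by (roughly) the dimension of the base of any pencil it would induce, but extracting the sharp threshold $2n$ is presumably where the paper's new input — the characterization of exactness of the shortest Grassmannian BGG complex in terms of $\ker\psi_2$ — does the real work, packaging the vanishing of the relevant cohomology into a lower bound on ranks. A secondary technical point is making the secant-variety dimension count rigorous in the boundary cases (when $q$ is close to $2n-1$, or when the secant variety is defective), so I would state the needed dimension estimate as a clean lemma about $\mathbb{G}(2,q)$ and its secants, check it is non-defective in the relevant range (secant varieties of Grassmannians of lines are known to be of expected dimension except for $\mathbb{G}(2,q)$ with small $q$, which fall in the trivial $q\leq 2n-1$ case anyway), and then feed it into the projective dimension theorem as above.
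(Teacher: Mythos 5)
Your second half (the dimension count) is sound, but as written the proposal has a genuine gap exactly where you flag your uncertainty: you never establish that the absence of higher irrational pencils forces every nonzero element of $\ker \psi_2$ to have rank at least $2\dim X$, and you guess that this threshold is where the Grassmannian BGG machinery must enter. It is not: in the paper this is Lemma \ref{lem1}, an elementary consequence of Theorem \ref{GCdF}. If $v = v_1\wedge v_2 + \cdots + v_{2k-1}\wedge v_{2k} \in \ker\psi_2$ has rank $2k$ with $k \leq d-1$, then wedging $v$ with $v_1\wedge v_3\wedge\cdots\wedge v_{2k-3}$ produces the nonzero decomposable element $v_1\wedge v_3\wedge\cdots\wedge v_{2k-3}\wedge v_{2k-1}\wedge v_{2k}$, whose image under $\psi_{k+1}$ vanishes because it factors through $\psi_2(v)=0$; Theorem \ref{GCdF} then gives a higher irrational pencil of base dimension at most $k$. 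Hence ``no higher irrational pencils'' rules out ranks $2,4,\ldots,2(d-1)$, so every nonzero kernel element has rank at least $2d$, and $\ker\psi_2 = 0$ outright when $q \leq 2d-1$. Without this step your argument only excludes decomposable (rank $2$) elements, which via your count yields $h^{2,0} \geq 2q-3$, far weaker than the stated theorem for $d \geq 3$. So the missing ingredient is easy, but it is missing, and you misattribute it to the BGG complex (whose actual role in the paper is the opposite one, see below).

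Once the rank threshold is in hand, your route is a correct and genuinely different --- in fact more elementary --- way to finish, and it recovers exactly the paper's numbers. The rank $\leq 2m$ locus is the cone over $\Sec^m\left(\mG_2\right)$, of codimension $\binom{q-2m}{2}$ in $\bigwedge^2 V$ (Pfaffian stratification), so disjointness of $\mP\left(\ker\psi_2\right)$ from the rank $\leq 2d-2$ locus forces, by the projective dimension theorem, $\dim\ker\psi_2 \leq \binom{q-2d+2}{2}$, whence $h^{2,0} \geq \binom{q}{2}-\binom{q-2d+2}{2} = 2(d-1)q - \binom{2d-1}{2}$; for $q \leq 2d-1$ the locus is everything, $\ker\psi_2=0$, and $h^{2,0}\geq\binom{q}{2}$. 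The paper instead proves generic exactness of the three-term complex $0 \ra \Sigma^2 W \ra W\otimes V \ra H^0\left(X,\Omega_X^2\right)$ over $\mG_{2r}$ (Theorem \ref{thm_c22}, via finiteness of the projection of $\Sec^r\left(\mG_2\right)$ away from $\mP\left(\ker\psi_2\right)$ plus a ramification argument) and then counts the cokernel; your approach buys brevity and avoids the BGG complex, while the paper's buys the generic exactness statement itself, which is of independent interest for larger $r$. Two small corrections to your sketch: secant varieties of $\mG_2$ are \emph{defective} for $m\geq 2$ (the correct dimension is the Pfaffian one you use, not the expected one, so your non-defectivity check is unnecessary and false as stated); and the disjointness inequality should be $\left(\dim\ker\psi_2 - 1\right) + \dim \Sec^{d-1}\left(\mG_2\right) \leq \binom{q}{2}-2$, since the ambient $\mP\left(\bigwedge^2 V\right)$ has dimension $\binom{q}{2}-1$; with that bookkeeping the rearrangement does give exactly the claimed bound.
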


Although the case $q \leq 2d-1$ was already proven by Causin and Pirola in \cite{CP}, the bound in the general case is, as far as the author is aware, completely new (at least for dimension $d \geq 4$).

{\bf Acknowledgements:}

The author wants to thank the algebraic geometers and the PhD-students of the University of Pavia for his great hospitality during the stay while this work was developed, and specially Professor Gian Pietro Pirola for very inspiring conversations.


\section{Notations and preliminaries}

In this section we set the basic notation and recall the main known results which will be used along the article.

Throughout the paper, $X$ will denote a complex smooth irregular projective (or more generally, compact K\"ahler) variety of dimension $d = \dim X$. For the sake of brevity, we denote by $V = H^0\left(X,\Omega_X^1\right)$ the space of holomorphic 1-forms on $X$. The dimension of $V$ is $q = q\left(X\right)$, the {\em irregularity} of $X$, which is positive by assumption. We also denote by $h^{p,q} = h^{p,q}\left(X\right) = \dim_{\mC} H^q\left(X,\Omega_X^p\right)$ the Hodge numbers of $X$, as usual.

Let $A = \Alb\left(X\right)$ denote the Albanese torus of $X$, which is a $q$-dimensional complex torus (projective if $X$ is so), and $a=a_X: X \longra A$ the Albanese morphism of $X$.

\begin{df}[Maximal Albanese dimension, Albanese general type varieties]
An irregular variety $X$ is said to be of {\em maximal Albanese dimension} if $\dim a\left(X\right) = \dim X$ i.e., if the Albanese morphism is generically finite.

If furthermore $a$ is not surjective, i.e. $a\left(X\right) \subsetneq \Alb\left(X\right)$, $X$ is said to be of {\em Albanese general type}.

These definitions can be extended to non-smooth varieties considering any desingularization.
\end{df}

Equivalently, a variety is of Albanese general type if it is of maximal Albanese dimension and $q\left(X\right) > \dim X$. For example, every irregular curve (i.e. of genus $g \geq 1$) is of maximal Albanese dimension, because the Albanese map is nothing but the Abel-Jacobi map. Moreover, the curves of Albanese general type are exactly the curves of genus $g \geq 2$.

For any $n=1,\ldots,d$, let
$$\psi_n: \bigwedge^n H^0\left(X,\Omega_X^1\right) \ra H^0\left(X,\Omega_X^n\right)$$
be the map induced by wedge product. Since $a^*: H^0\left(A,\Omega_A^1\right) \ra H^0\left(X,\Omega_X^1\right)$ is an isomorphism and $H^0\left(A,\Omega_A^n\right) \cong \wedge^n H^0\left(A,\Omega_A^1\right)$, we can identify $\psi_n$ with the pull-back $a^*: H^0\left(A,\Omega_A^n\right) \ra H^0\left(X,\Omega_X^n\right)$ of $n$-forms by the Albanese morphism. Because of this interpretation, the maps $\psi_n$ are very related to the geometry of $X$, and in particular, the existence of decomposable elements in $\ker \psi_n$ has very strong consequences, as was shown independently by Ran and Catanese (see Theorem \ref{GCdF} below).

The case $n=2$ has been studied by  Causin and Pirola in \cite{CP}, proving in particular that $\psi_2$ is injective for $q \leq 2d - 1$, and also by Barja, Naranjo and Pirola in \cite{BNP}, where the authors focus on the consequences of the existence of elements of rank $2d$ (what they call {\em generalized lagrangian forms}) in the kernel of $\psi_2$. Our aim is to go further in the study of $\ker \psi_2$ in order to obtain new lower-bounds on $h^{2,0}$.

We will now introduce some basic notions on fibrations of irregular varieties, as well as the main characterization of some of them in terms of the algebra of holomorphic forms. Recall that a {\em fibration} is a surjective flat morphism $f: X \ra Y$ of varieties which has connected fibres. When dealing with irregular varieties, one can consider some special classes of fibrations:

\begin{df}[Irregular fibration, higher irrational pencil]
A fibration $f: X \longra Y$ is called {\em irregular} if $Y$ is irregular. If furthermore $Y$ is of Albanese general type, then $f$ is said to be a {\em higher irrational pencil} (on $X$).
\end{df}

Note that irregular fibrations (resp. higher irrational pencils) are higher-dimensional analogues to fibrations over non-rational curves (resp. curves of genus $g \geq 2$).

The existence of higher irrational pencils is closely related to the maps $\psi_n$, as the following theorem shows:

\begin{thm}[Generalized Castelnuovo-de Franchis (\cite{Cat}, Theorem 1.14 or \cite{Ran}, Proposition II.1)] \label{GCdF}
If $w_1,\ldots,w_n \in H^0\left(X,\Omega_X^1\right)$ are linearly independent 1-forms such that $\psi_n\left(w_1 \wedge \cdots \wedge w_n\right) = 0$, then there exists a higher irrational pencil $f: X \longra Y$ over a normal variety $Y$ of dimension $\dim Y < n$ and such that $w_i \in f^* H^0\left(Y,\Omega_Y^1\right)$.
\end{thm}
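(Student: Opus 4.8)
The plan is to follow the classical strategy of Catanese and Ran: convert the vanishing of the wedge product into an integrable holomorphic foliation and then integrate it to a fibration through the Albanese morphism. First I would use that on a compact K\"ahler variety every holomorphic $1$-form is $d$-closed, so $dw_i=0$ for all $i$. The hypothesis $\psi_n(w_1\wedge\cdots\wedge w_n)=0$ says that the holomorphic $n$-form $w_1\wedge\cdots\wedge w_n$ vanishes identically, i.e. the pointwise values $w_1(x),\ldots,w_n(x)\in T_x^*X$ are linearly dependent at every $x$. Let $r$ be the generic rank of their span on a dense Zariski-open $U\subseteq X$; since the $w_i$ are globally independent but pointwise dependent, $r\leq n-1$.

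After reindexing I may assume $w_1,\ldots,w_r$ are pointwise independent on $U$, so their common kernel is a holomorphic distribution $\caD$ of corank $r$, and the remaining $w_{r+1},\ldots,w_n$ are pointwise combinations of them and hence annihilate $\caD$ as well. Because $dw_i=0$ lies trivially in the ideal generated by $w_1,\ldots,w_r$, the Frobenius integrability condition is automatic, and by the holomorphic Frobenius theorem $\caD$ defines a foliation $\caF$ with leaves of dimension $d-r$, along which all the $w_i$ vanish.

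Next I would globalize via $a:X\longra A=\Alb(X)$, writing $w_i=a^*\eta_i$ with $\eta_i$ translation-invariant on $A$, so that $\caF$ is pulled back from the linear foliation of $A$ cut out by $\eta_1,\ldots,\eta_r$. Producing the base as the Stein factorization of $a$ composed with projection to the relevant quotient abelian variety yields a fibration $f:X\longra Y$ onto a normal variety whose general fibre is a leaf-closure of $\caF$, of dimension $d-r$; hence $\dim Y=r\leq n-1<n$, and, the $w_i$ being constant along the fibres, they descend to $w_i\in f^*H^0(Y,\Omega_Y^1)$. The descended forms $\bar w_1,\ldots,\bar w_n$ stay linearly independent because $f^*$ is injective on $1$-forms, so $q(Y)\geq n>r=\dim Y$; moreover $\bar w_1,\ldots,\bar w_r$ are generically independent on the $r$-dimensional $Y$, which forces maximal Albanese dimension. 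Together these say $Y$ is of Albanese general type, i.e. $f$ is a higher irrational pencil, as asserted.

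The main obstacle is the globalization in the previous paragraph: the common kernel of the $\eta_i$ need not be tangent to a subtorus, so one cannot merely form a quotient of $A$, and one must instead exhibit the leaf-closures as the fibres of a genuine proper morphism onto a normal base (equivalently, show that $a(X)$ is swept out by translates of a fixed subtorus). This is exactly the point where compactness and the K\"ahler hypothesis are indispensable, through the closedness of the $w_i$: it turns them into local period coordinates and forces the analytic leaves of $\caF$ to assemble into the algebraic fibres of $f$ rather than remaining a merely analytic foliation.
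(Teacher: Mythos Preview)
The paper does not supply its own proof of this statement: Theorem~\ref{GCdF} is quoted with attribution to Catanese and Ran and then used as a black box. So there is nothing in the present paper to compare your argument against.

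That said, your outline is precisely the Catanese--Ran strategy (closed holomorphic $1$-forms $\Rightarrow$ integrable holomorphic foliation $\Rightarrow$ algebraic fibration through the Albanese), and you have correctly isolated the only genuinely delicate step: proving that the analytic leaves of $\caF$ are in fact the fibres of a proper morphism. Your paragraph on globalization is internally inconsistent, though: you first construct $Y$ as ``the Stein factorization of $a$ composed with projection to the relevant quotient abelian variety'', and only afterwards observe that the common kernel of the $\eta_i$ need not be tangent to a subtorus, so no such quotient is available in general. In the cited references this is resolved differently --- Catanese uses the meromorphic map $x\mapsto\left[w_1(x):\cdots:w_n(x)\right]$ (or the induced map to a Grassmannian) and takes its Stein factorization, while Ran argues on the Albanese image using the structure of subvarieties of abelian varieties --- rather than by projecting $A$ onto a quotient torus. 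As written, your sketch is an accurate roadmap with the key difficulty honestly flagged, but the construction of $f$ is not actually carried out.
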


Because of this theorem, we will deal with linear subspaces of $V= H^0\left(X,\Omega_X^1\right)$, hence with Grassmannian varieties. For any positive integer $k$, we will denote by $\mG_k = Gr\left(k,V\right)$ the Grassmannian of $k$-dimensional subspaces of $V$. Recall that $\mG_k$ is naturally a subvariety of the projective space $\mP_k = \mP\left(\bigwedge^k V\right)$ via the Pl\"ucker embedding.

In general, if $E$ is any vector space and $e \in E$ is a non-zero vector, we denote by $\left[e\right]$ the corresponding point in $\mP\left(E\right)$. With this notation, the Pl\"ucker embedding maps the subspace spanned by $v_1,\ldots,v_k \in V$ to the point $\left[v_1\wedge\cdots\wedge v_k\right] \in \mP_k$.

We will also use symmetric powers of vector spaces and vector bundles. If $E$ is a vector space (or a vector bundle), we denote by $\Sigma^r E$ its $r$-th symmetric power, which is a quotient of $E^{\otimes r}$. We denote elements in $\Sigma^r E$ using multiplicative notation, so that if $e_1,\ldots,e_r \in E$ are arbitrary elements, we denote by $e_1\cdots e_r \in \Sigma^r E$ the image of $e_1 \otimes \cdots \otimes e_r$, and by $e_1^r$ the image of $e_ 1^{\otimes r} = e_1 \otimes \stackrel{r}{\cdots} \otimes e_1$.

Finally, we will also work with secant varieties of $\mG_k$ inside $\mP_k$. In general, if $X \subset \mP\left(E\right)$ is any projective variety, and $r$ is any positive integer, we denote by $\Sec^r\left(X\right) \subseteq \mP\left(E\right)$ the $r$-th secant variety of $X$ i.e., the closure of the union of the $\left(r-1\right)$-planes spanned by $r$ independent points in $X$. In particular, $\Sec^1\left(X\right) = X$ and $\Sec^2\left(X\right)$ is the usual secant variety of $X$. More explicitly, $\Sec^r\left(X\right)$ is the closure of the set
$$\left\{\left[e_1+\cdots+e_r\right] \, \vert \, \left[e_1\right],\ldots,\left[e_r\right] \in X\right\}.$$


\section{The Grassmannian BGG complex}

This section is devoted to explain the construction of our main tool, which we call the {\em Grassmannian BGG complex}. 

\begin{df}
Given two positive integers $r$ and $n \leq \min\{r,d\}$, and a linear subspace $W \subseteq V = H^0\left(X,\Omega^1_X\right)$, let $C_{r,n,W}$ be the complex
\beq \label{CrnW}
0 \ra \Sigma^rW \ra \Sigma^{r-1}W \otimes H^0\left(X,\Omega_X^1\right) \ra \cdots \ra \Sigma^{r-i}W \otimes H^0\left(X,\Omega_X^i\right) \ra \cdots \ra \Sigma^{r-n}W \otimes H^0\left(X,\Omega_X^n\right),
\enq
where the maps
$$\mu_i: \Sigma^{r-i}W \otimes H^0\left(X,\Omega_X^i\right) \longra \Sigma^{r-i-1}W \otimes H^0\left(X,\Omega_X^{i+1}\right)$$
are given by
$$\mu_i\left(\left(w_1 \cdots w_{r-i}\right)\otimes\alpha\right) = \sum_{j=1}^{r-i} \left(w_1 \cdots \hat{w_j} \cdots w_{r-i}\right)\otimes\left(w_j \wedge \alpha\right).$$
\end{df}

Since for every $1 \leq n' < n$ the complex $C_{r,n',W}$ is a truncation of $C_{r,n,W}$, we may assume that $n$ is the greatest possible, that is $n = \min\{r,d\}$, and denote the complex simply by $C_{r,W}$.

\begin{lem}
The maps $\mu_i$ are well defined and make $C_{r,W}$ a complex.
\end{lem}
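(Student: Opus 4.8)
The plan is to establish well-definedness by first realising $\mu_i$ as a map out of a tensor power and then checking that it descends to the symmetric power, and afterwards to verify the relation $\mu_{i+1}\circ\mu_i=0$ by a direct sign computation.

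First I would note that the formula makes sense on the nose: for $w_j\in W$ and $\alpha\in H^0\left(X,\Omega_X^i\right)$ the wedge $w_j\wedge\alpha$ is a holomorphic $(i+1)$-form, so each summand lies in $\Sigma^{r-i-1}W\otimes H^0\left(X,\Omega_X^{i+1}\right)$ (here $i$ ranges over $0,\dots,n-1$, so $i+1\le n\le d$, and $r-i-1\ge r-n\ge 0$, hence all the symmetric powers appearing are meaningful). To see that $\mu_i$ is well defined, consider the linear map
$$\tilde\mu_i: W^{\otimes(r-i)}\otimes H^0\left(X,\Omega_X^i\right)\longra \Sigma^{r-i-1}W\otimes H^0\left(X,\Omega_X^{i+1}\right),\qquad w_1\otimes\cdots\otimes w_{r-i}\otimes\alpha\longmapsto\sum_{j=1}^{r-i}\left(w_1\cdots\hat{w_j}\cdots w_{r-i}\right)\otimes\left(w_j\wedge\alpha\right),$$
which is well defined precisely because its defining expression is multilinear in $w_1,\dots,w_{r-i}$ and in $\alpha$. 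It then remains to check that $\tilde\mu_i$ is invariant under the $S_{r-i}$-action permuting the first $r-i$ tensor factors, so that it factors through $\Sigma^{r-i}W\otimes H^0\left(X,\Omega_X^i\right)$ and yields the stated map $\mu_i$. Since $S_{r-i}$ is generated by adjacent transpositions, it is enough to treat the swap of two of the $w$'s; but such a swap merely permutes the terms of the sum, once one uses that the remaining $w$'s now sit inside the \emph{symmetric} power $\Sigma^{r-i-1}W$, where their order is immaterial.

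For the complex condition I would simply compute $\mu_{i+1}\circ\mu_i$ on a generator $\left(w_1\cdots w_{r-i}\right)\otimes\alpha$. Applying $\mu_i$ and then $\mu_{i+1}$ produces the double sum
$$\sum_{\substack{1\le j,k\le r-i\\ k\ne j}}\left(w_1\cdots\hat{w_j}\cdots\hat{w_k}\cdots w_{r-i}\right)\otimes\left(w_k\wedge w_j\wedge\alpha\right).$$
For each unordered pair $\{j,k\}$ with $j\ne k$, the ordered pairs $(j,k)$ and $(k,j)$ contribute two terms whose $\Sigma^{r-i-2}W$-factors coincide (again by symmetry of the symmetric power), while their form-factors are $w_k\wedge w_j\wedge\alpha$ and $w_j\wedge w_k\wedge\alpha=-\,w_k\wedge w_j\wedge\alpha$; hence the terms cancel in pairs and the total is zero. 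Thus $\mu_{i+1}\circ\mu_i=0$ and $C_{r,W}$ is a complex.

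I do not expect a genuine obstacle here: the content is elementary multilinear algebra, and the only place where one must be careful is the interplay of symmetry and sign — one uses that the $\Sigma^{\bullet}W$-factors are literally symmetric (so the two ``hatted'' products agree verbatim, not merely up to sign) and that the antisymmetry of $\wedge$ supplies exactly the sign needed for the cancellation.
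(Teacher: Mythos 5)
Your argument is correct and is exactly the ``straightforward computation'' that the paper leaves to the reader: lifting to the tensor power and checking $S_{r-i}$-invariance gives well-definedness, and the pairwise cancellation $w_j\wedge w_k\wedge\alpha=-\,w_k\wedge w_j\wedge\alpha$ against the symmetry of the $\Sigma^{\bullet}W$-factor gives $\mu_{i+1}\circ\mu_i=0$. Nothing is missing; you have simply written out in full what the paper asserts without detail.
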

\begin{proof}
It is an straightforward computation.
\end{proof}

Note that for a 1-dimensional $W$, generated by $w$, we have $\Sigma^r W \equiv \mC\langle w^r \rangle \cong \mC$, and $C_{d,\mC\langle w \rangle}$ is nothing but the complex
$$0 \longra H^0\left(X,\caO_X\right) \stackrel{\wedge w}{\longra} H^0\left(X,\Omega^1_X\right) \stackrel{\wedge w}{\longra} \ldots \stackrel{\wedge w}{\longra} H^0\left(X,\omega_X\right),$$
which is complex-conjugate to the {\em derivative complex} studied by Green and Lazarsfeld in \cite{GL1}.

Our first aim is to study the exactness of $C_{r,W}$. More precisely, we wish to obtain conditions on $W$ which guarantee that $C_{r,W}$ is exact in some (say $m$) of its first steps, (i.e., $C_{r,m,W}$ is exact), because this exactness will provide several inequalities between the Hodge numbers $h^{p,0}\left(X\right)$. Since we want to consider different subspaces $W$, we ``glue'' all the complexes (\ref{CrnW}) with fixed $k = \dim W$ as follows.

Let $\mG = \mG_k = Gr\left(k,V\right)$ be the Grassmannian of $k$-planes in $V$, and let $S \subseteq V \otimes \caO_{\mG}$ be the tautological subbundle, the vector bundle of rank $k$ whose fibre over a point $W \in \mG$ is precisely the subspace $W \subseteq V$.

\begin{df}[Grassmannian BGG complex]
For any $r \geq 1$, the $r$-th {\em Grassmannian BGG complex} (of rank $k$) of $X$ is the complex of vector bundles on $\mG_k$
\beq \label{BGG-complexes}
C_{r}: 0 \ra \Sigma^rS \ra \Sigma^{r-1}S \otimes H^0\left(X,\Omega_X^1\right) \ra \cdots \ra \Sigma^{r-i}S \otimes H^0\left(X,\Omega_X^i\right) \ra \cdots \ra \Sigma^{r-n}S \otimes H^0\left(X,\Omega_X^n\right)
\enq
where $n = \min\{r,d\}$ and over each point $W \in \mG_k$ it is given by (\ref{CrnW}).
\end{df} 

\begin{rmk}
If $k=1$, then $\mG = \mP = \mP\left(H^0\left(X,\Omega_X^1\right)\right)$, $S = \caO_{\mP}\left(-1\right)$ and $\Sigma^rS= \caO_{\mP}\left(-r\right)$. So taking $k=1$ and $r=d$, the above complex is (the complex-conjugate of) the BGG complex introduced by Lazarsfeld and Popa in \cite{LP}. In this way, the Grassmannian BGG complexes can be seen as generalizations (hence the name), with the new feature that they capture also the additive structure of the algebra of holomorphic differential forms of $X$.
\end{rmk}

The interest of studying these complexes is that, whenever they are exact at some point $W \in \mG$, they provide some inequalities involving the Hodge numbers $h^{p,0}\left(X\right) = h^0\left(X,\Omega_X^p\right)$. These inequalities are much stronger when the complex is exact {\em at every point}, so that the cokernel sheaves 
of the maps $\mu_i$ are vector bundles and a deeper study of them is feasible. For example, the proof of the higher-dimensional Castelnuovo-de Franchis inequality given by Lazarsfeld amd Popa in \cite{LP} is based on the fact that the BGG sheaf (the cokernel of the last map of $C_d$ with $k=1$) is an indecomposable vector bundle on $\mP^{q-1}$.

In this paper we deal with the case $r=2$, which is ``easy'' to study more or less by hand, and we obtain some inequalities for $h^{2,0}\left(X\right)$ in any dimension for non fibred irregular varieties, or more generally, for varieties which do not admit a generalized Lagrangian form of low rank. These bounds coincide with those obtained by Causin and Pirola in \cite{CP} for low irregularity, but are stronger than those known in higher dimension for high irregularity.

However, for bigger values of $r$, the approach followed in this article becomes too messy and a different tool to analyze the exactness of the complex $C_r$ is needed.


\section{Bounds on $h^{2,0}$}

In this section we consider the complex
\beq \label{C22}
C_2: 0 \longra \Sigma^2 S \longra S \otimes H^0\left(X,\Omega_X^1\right) \longra \caO_{\mG} \otimes H^0\left(X,\Omega_X^2\right)
\enq
over a Grasmannian $\mG = \mG_{2k} = \mG\left(2k,V\right)$ for some $1 \leq k \leq \frac{q}{2}$, and use it to obtain lower bounds on $h^{2,0}\left(X\right)$.

It turns out that the exactness of (\ref{C22}) at a general point is related to the existence of bivectors of small rank in the kernel of $\psi_2$. We start defining such notion.

\begin{df}
An element $v \in \bigwedge^2 V$ is said to have rank $2k$ if it can be written as
$$v=v_1 \wedge v_2 + \ldots v_{2k-1} \wedge v_{2k}$$
for some linearly independent elements $v_1, \ldots, v_{2k} \in V$.
\end{df}

\begin{rmk}
If we represent $v$ as an antisymmetric $q \times q$ matrix $A$ with respect to some fixed basis of $V$, then the rank of $v$ coincides with the rank of $A$ (which is always even). In particular, any element $v \in \bigwedge^2 V$ has rank at most $q$, and the elements of rank 2 are precisely the (non-zero) decomposable elements. More generally, the set of bivectors of rank at most $2m$ is the cone over $\Sec^m\left(\mG_2\right) \subseteq \mP\left(\bigwedge^2 V\right)$.
\end{rmk}

We present now our main result.

\begin{thm} \label{thm_c22}
Fix a positive integer $k \leq \frac{q}{2}$. If every non-zero element in $\ker \psi_2$ has rank bigger than $2k$, then the complex (\ref{C22}) on $\mG_{2k}$ is generically exact.
\end{thm}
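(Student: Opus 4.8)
The plan is to analyze the complex $C_2$ pointwise and reduce generic exactness to a linear-algebra statement about $\ker\psi_2$. Fix a point $W\in\mG_{2k}$ with basis $w_1,\ldots,w_{2k}$; over $W$ the complex (\ref{C22}) reads
$$0\longra \Sigma^2 W\stackrel{\mu_0}{\longra} W\otimes V\stackrel{\mu_1}{\longra} H^0(X,\Omega_X^2),$$
where $\mu_1\big((w)\otimes\alpha\big)=\psi_2(w\wedge\alpha)$ and $\mu_0(w w')=w\otimes w'+w'\otimes w$. Exactness at $\Sigma^2W$ is immediate: if $\sum_{i\le j}c_{ij}w_iw_j\mapsto 0$ then $\sum c_{ij}(w_i\otimes w_j+w_j\otimes w_i)=0$ in $W\otimes V$, and since the $w_i$ are independent this forces all $c_{ij}=0$. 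So the only issue is exactness at the middle term, i.e. $\ker\mu_1=\im\mu_0$. The inclusion $\im\mu_0\subseteq\ker\mu_1$ is the complex property; for the reverse, I first identify $\im\mu_0$ inside $W\otimes V$. Writing a general element of $W\otimes V$ as $\sum_{i=1}^{2k} w_i\otimes v_i$ with $v_i\in V$, the condition that it lie in $\im\mu_0$ is that, after the natural splitting $V=W\oplus W'$ (any complement), the ``$W$-component'' of $(v_1,\ldots,v_{2k})$ be a symmetric matrix and the ``$W'$-component'' vanish — more precisely $\im\mu_0=\Sigma^2W\hookrightarrow W\otimes W\subseteq W\otimes V$. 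Equivalently, an element $\eta=\sum w_i\otimes v_i$ is NOT in $\im\mu_0$ exactly when the element $\sum_i w_i\wedge v_i\in\bigwedge^2 V$ is nonzero (the antisymmetrization $W\otimes V\to\bigwedge^2V$ has kernel precisely $\im\mu_0$, since $\bigwedge^2 W$ is a summand of $\bigwedge^2 V$ and the composite $\Sigma^2W\to W\otimes W\to\bigwedge^2 W$ is zero while $W\otimes W'\oplus\bigwedge^2 W'$ injects).

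Granting this, exactness at the middle over $W$ becomes: \emph{if $v=\sum_{i=1}^{2k} w_i\wedge v_i\in\bigwedge^2 V$ is a nonzero bivector supported on $W$ (in the sense that it is a sum of $2k$ wedges each involving a basis vector of $W$) and $\psi_2(v)=0$, then there is no such element} — but by hypothesis every nonzero element of $\ker\psi_2$ has rank $>2k$, whereas any such $v$ manifestly has rank $\le 2k$ (it is a sum of $2k$ decomposables, and by the remark after the rank definition its matrix rank is at most $2k$). Hence $\ker\psi_2\cap\{\text{rank}\le 2k\}=\{0\}$ forces $v=0$, i.e. $\eta\in\im\mu_0$. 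This gives exactness at \emph{every} point $W\in\mG_{2k}$, which is stronger than generic exactness. I should double-check whether the theorem really wants pointwise exactness everywhere — if the intended statement is genuinely only generic, the argument above still delivers it a fortiori, so I will simply prove the stronger pointwise statement and remark that generic exactness follows. (If there is a subtlety I am missing about the bundle maps $\mu_i$ failing to have constant rank, generic exactness would instead follow from upper semicontinuity of the rank of $\mu_1$ together with exactness at one point, but the pointwise argument seems to avoid this entirely.)

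The one genuinely substantive step — and the place I expect to spend the most care — is the identification $\ker(W\otimes V\to\bigwedge^2 V)=\im(\Sigma^2 W\to W\otimes V)=\im\mu_0$, because the map $W\otimes V\to\bigwedge^2 V$ used here is $w\otimes v\mapsto w\wedge v$, which is \emph{not} surjective and whose kernel must be computed honestly. The clean way is: choose a complement $W'$ so $V=W\oplus W'$; then $W\otimes V=(W\otimes W)\oplus(W\otimes W')$ and $\bigwedge^2 V=\bigwedge^2 W\oplus(W\otimes W')\oplus\bigwedge^2 W'$; the map is the identity on the $W\otimes W'$ summand and on $W\otimes W$ it is the antisymmetrization $W\otimes W\to\bigwedge^2 W$, whose kernel is exactly $\Sigma^2 W$ — and this $\Sigma^2W\subseteq W\otimes W\subseteq W\otimes V$ is precisely $\im\mu_0$ by inspection of the formula for $\mu_0$. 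Everything else is the routine translation of the complex $C_2$ over a point into these coordinates, plus invoking the rank hypothesis and Theorem \ref{GCdF}-style bookkeeping for the rank of a bivector supported on a $2k$-dimensional subspace, which is standard.
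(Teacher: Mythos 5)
There is a genuine gap, and it is located exactly where you placed your confidence: the rank bookkeeping. Your reduction of middle-term exactness at a point $W$ is fine (the kernel of the antisymmetrization $W \otimes V \ra \bigwedge^2 V$ is indeed $\im \mu_0 \cong \Sigma^2 W$, so exactness at $W$ is equivalent to $\ker \psi_2 \cap \left(W \wedge V\right) = 0$). But the claim that a nonzero bivector $v = \sum_{i=1}^{2k} w_i \wedge v_i$ ``manifestly has rank $\leq 2k$'' is false: each summand $w_i \wedge v_i$ has rank $2$, so such a sum has rank up to $4k$, not $2k$. For instance, with $k=1$ and $w_1,w_2,u_1,u_2$ independent, $w_1 \wedge u_1 + w_2 \wedge u_2$ lies in $W \wedge V$ but has rank $4$. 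The elements of rank $\leq 2k$ are those lying in $\bigwedge^2 U$ for some $2k$-dimensional subspace $U$, which is much smaller than $W \wedge V$. Consequently the hypothesis (no nonzero element of $\ker\psi_2$ of rank $\leq 2k$) does not force $v=0$, and your argument for exactness at the middle term collapses. Worse, your stronger conclusion (exactness at \emph{every} $W$) is actually false in general: if $\ker\psi_2$ contains an element $v_0$ of rank $2m$ with $k < m \leq 2k$, say $v_0 = e_1\wedge e_2 + \cdots + e_{2m-1}\wedge e_{2m}$, then choosing $W$ to contain $e_1,e_3,\ldots,e_{2m-1}$ gives $v_0 \in W\wedge V$, so the complex fails to be exact at that $W$ even though the hypothesis of the theorem holds. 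This is precisely why the statement is only about \emph{generic} exactness, and your fallback via semicontinuity does not help because you have not exhibited exactness at any single point.

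The paper's proof is genuinely global rather than pointwise. The hypothesis is restated as $\mP\left(\ker\psi_2\right) \cap \Sec^k\left(\mG_2\right) = \emptyset$, which makes the projection $\pi_k: \Sec^k\left(\mG_2\right) \ra \mP\left(H^0\left(X,\Omega_X^2\right)\right)$ induced by $\psi_2$ a finite morphism onto its image. Then non-exactness of (\ref{C22}) at a point $W$ is converted into differential information: the offending class $\sum_i w_i \otimes \alpha_i$ produces a curve in $\Sec^k\left(\mG_2\right)$ through $p = \left[w_1\wedge w_2 + \cdots + w_{2k-1}\wedge w_{2k}\right]$ whose nonzero tangent vector is killed by $d\pi_k$, i.e.\ $\pi_k$ ramifies at every point of $\mP\bigl(\bigwedge^2 W\bigr)$. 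If this happened for general $W$, the finite map $\pi_k$ would be ramified along all of $\Sec^k\left(\mG_2\right)$, which is impossible in characteristic zero. To repair your proposal you would need to supply some argument of this kind relating failure of exactness at the general $W$ to a contradiction; the pointwise linear algebra alone cannot do it.
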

\begin{proof}
By the previous remark, the hypothesis is equivalent to $\mP\left(\ker \psi_2\right) \cap \Sec^k\left(\mG_2\right) = \emptyset$. In this case, the rational map $\pi = \mP\left(\psi_2\right) : \mP\left(\bigwedge^2 V\right) \dra \mP\left(H^0\left(X,\Omega_X^2\right)\right)$ restricts to a {\em morphism}
$$\pi_k = \pi_{|\Sec^k\left(\mG_2\right)} : \Sec^k\left(\mG_2\right) \longra \mP\left(H^0\left(X,\Omega_X^2\right)\right)$$
which is finite onto its image. Indeed, if it is not the case, then there exists a curve $C \subseteq \Sec^k\left(\mG_2\right)$ such that $\pi\left(C\right) = p$ is just a point. Such a curve is thus contained in the linear space $\pi^{-1}\left(p\right)$, which contains $\mP\left(\ker \psi_2\right)$ as a hyperplane, and hence $C$ should intersect it, contradicting the fact that $\pi_k$ is defined everywhere in $\Sec^k\left(\mG_2\right)$.

Now suppose that the complex (\ref{C22}) is not exact at a point $W \in \mG_{2k}$, i.e. the complex of vector spaces
\beq \label{C22W}
C_2: 0 \longra \Sigma^2 W \stackrel{\mu_0}{\longra} W \otimes H^0\left(X,\Omega_X^1\right) \stackrel{\mu_1}{\longra} H^0\left(X,\Omega_X^2\right)
\enq
is not exact. Fix $\{w_1,\ldots,w_{2k}\}$ any base of $W$. Since $\mu_0\left(w_i w_j\right) = w_i \otimes w_j + w_j \otimes w_i$ for any $i,j$, and the elements $w_i \otimes w_j$ are linearly independent in $W \otimes H^0\left(X,\Omega_X^1\right)$, $\mu_0$ is clearly injective, identifying $\Sigma^2 W$ with the subspace of $W \otimes H^0\left(X,\Omega_X^1\right)$ spanned by $\{w_i \otimes w_i\}_{1 \leq i \leq k} \cup \{w_i \otimes w_j + w_j \otimes w_i\}_{1\leq i<j\leq k}$.

Therefore, the lack of exactness must come from the central term, that is, there exist $\alpha_1 \ldots \alpha_{2k} \in H^0\left(X,\Omega_X^1\right)$ such that $\sum_{i=1}^{2k} w_i \otimes \alpha_i \not\in \im\mu_0$ but
$$\mu_1\left(\sum_{i=1}^{2k} w_i \otimes \alpha_i\right) = \psi_2\left(\sum_{i=1}^{2k} w_i \wedge \alpha_i\right) = 0.$$
By substracting a suitable element from $\mu_0\left(\Sigma^2 W\right)$, we can assume furthermore that $\alpha_i \not\in \mC\langle w_1, \ldots, w_i \rangle$ for every $i$. In particular, we may assume that $\alpha_{2k} \not \in W$.

Consider now the curve $C \subseteq \Sec^k\left(\mG_2\right)$ parametrized by
$$\gamma\left(t\right) = [\left(w_1 - t \alpha_2\right) \wedge \left(w_2 + t \alpha _1\right) + \cdots + \left(w_{2k-1} - t \alpha_{2k}\right) \wedge \left(w_{2k} + t \alpha_{2k-1}\right)], \qquad t \in \mC.$$
Let $p = \gamma\left(0\right) = [w_1 \wedge w_2 + \ldots + w_{2k-1} \wedge w_{2k}]$. The tangent vector to $C$ at $p$ (to the branch of $C$ given by the image of a neighbourhood of $t=0$) is the class of
$$v=\sum_{i=1}^{2k} w_i \wedge \alpha_i$$
in $T_{\mP\left(\bigwedge^2 V\right),p} = \left(\bigwedge^2 V\right)/\mC\langle w_1 \wedge w_2 + \ldots + w_{2k-1} \wedge w_{2k} \rangle.$ Since at least $\alpha_{2k} \not \in W$, this class is clearly non zero. However, its image by the differential or $\pi_k$ is precisely the class of
$$\psi_2\left(\sum_{i=1}^{2k} w_i \wedge \alpha_i\right)=0$$
in $T_{\mP\left(H^0\left(X,\Omega_X^2\right)\right),\pi\left(p\right)} = H^0\left(X,\Omega_X^2\right)/\mC\langle \psi_2\left(w_1 \wedge w_2 + \ldots + w_{2k-1} \wedge w_{2k}\right) \rangle$, so $\pi_k$ is ramified at $p$. Since the general point of $\mP\left(\bigwedge^2 W\right)$ is of the form $[w_1 \wedge w_2 + \ldots + w_{2k-1} \wedge w_{2k}]$ for some basis of $W$, we see that $\pi_k$ ramifies at every point in $\mP\left(\bigwedge^2 W\right)$.

To finish the proof, note that $\Sec^k\left(\mG_2\right)$ is the union of all the $\mP\left(\bigwedge^2 W\right)$ as $W$ varies in $\mG_{2k}$, so if (\ref{C22}) were not exact for a general (and hence for any) $W \in \mG_{2k}$, then $\pi_k$ would be ramified all over $\Sec^k\left(\mG_2\right)$, contradicting the fact that it is finite.
\end{proof}

Now an easy dimension count gives our inequality:

\begin{cor} \label{cor1}
If there is no element of rank $2k \leq q$ in $\ker \psi_2$, then
$$h^{2,0}\left(X\right) \geq 2rq - \binom{2r+1}{2}$$
for all $1 \leq r \leq k$.
\end{cor}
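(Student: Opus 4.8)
The plan is to derive the inequality from the generic exactness of the complex (\ref{C22}) on $\mG_{2r}$, which is exactly what Theorem \ref{thm_c22} provides under the hypothesis. First I would fix $r$ with $1 \le r \le k$; since every non-zero element of $\ker\psi_2$ has rank bigger than $2k \ge 2r$, a fortiori none has rank $\le 2r$, so Theorem \ref{thm_c22} applies to $\mG_{2r}$ and tells us that (\ref{C22W}) is exact at a general $W \in \mG_{2r}$ with $\dim W = 2r$. As observed in the proof of that theorem, $\mu_0$ is always injective, so exactness at the general point means that the sequence of vector spaces
$$0 \longra \Sigma^2 W \stackrel{\mu_0}{\longra} W \otimes H^0\left(X,\Omega_X^1\right) \stackrel{\mu_1}{\longra} H^0\left(X,\Omega_X^2\right)$$
has $\im\mu_1 = \mu_1\left(W\otimes V\right)$ of dimension equal to $\dim\left(W\otimes V\right) - \dim\Sigma^2 W$, and this image sits inside $H^0\left(X,\Omega_X^2\right)$.

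The second step is the dimension count. We have $\dim W = 2r$, $\dim V = q$, so $\dim\left(W \otimes V\right) = 2rq$, while $\dim\Sigma^2 W = \binom{2r+1}{2}$. Exactness at $W$ then forces
$$h^{2,0}\left(X\right) = \dim H^0\left(X,\Omega_X^2\right) \geq \dim\left(W\otimes V\right) - \dim\Sigma^2 W = 2rq - \binom{2r+1}{2}.$$
Since this holds for the general point, and a single such $W$ suffices, we obtain the stated bound for that value of $r$; ranging over all $1 \le r \le k$ gives the corollary. One small point worth checking is that $2rq - \binom{2r+1}{2} \ge 0$ in the relevant range (equivalently $q \ge r + 1/2$, i.e. $q \ge 2r$, which is guaranteed by $2r \le 2k \le q$), so the inequality is not vacuous — though even if it were, it would still be formally correct.

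Honestly there is no serious obstacle here: the content is entirely in Theorem \ref{thm_c22}, and this corollary is the promised ``easy dimension count''. The only thing to be careful about is making sure the reduction from the bundle complex $C_2$ on $\mG_{2r}$ to the vector-space complex (\ref{C22W}) at a point is stated correctly — i.e. that ``generically exact'' for the complex of bundles means exact as a complex of vector spaces at the general fibre — and that we invoke $\mu_0$ injective (already established) so that exactness at the middle term yields precisely the rank equality $\dim\im\mu_1 = 2rq - \binom{2r+1}{2}$. After that, the bound on $h^{2,0}$ is immediate from $\im\mu_1 \subseteq H^0\left(X,\Omega_X^2\right)$.
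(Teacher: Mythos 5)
Your proposal is correct and follows essentially the same route as the paper: invoke Theorem \ref{thm_c22} for each $\mG_{2r}$ with $1 \leq r \leq k$, pick a point $W$ where the complex is exact, and do the dimension count $h^{2,0}\left(X\right) - 2rq + \binom{2r+1}{2} \geq 0$ (the paper phrases this as non-negativity of the cokernel of $\mu_1$, you as $\dim\im\mu_1 \leq h^{2,0}\left(X\right)$, which is the same count).
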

\begin{proof}
By Theorem \ref{thm_c22}, for every $1 \leq r \leq k$, the complex (\ref{C22}) over any $\mG=\mG_{2r}$ is generically exact. Let $W \in \mG_{2r}$ be such that
$$0 \longra \Sigma^2 W \longra W \otimes H^0\left(X,\Omega_X^1\right) \longra H^0\left(X,\Omega_X^2\right)$$
is exact. The cokernel of the last map has dimension
$$\dim H^0\left(X,\Omega_X^2\right)-\dim\left(W \otimes H^0\left(X,\Omega_X^1\right)\right)+\dim\left(\Sigma^2 W\right) = h^{2,0}\left(X\right)- 2rq + \binom{2r+1}{2},$$
which must be non-negative, giving the desired inequality.
\end{proof}

\begin{rmk}
The case $k=1$ is the classical Castelnuovo-de Franchis inequality. The case $k=2$ has been already considered in \cite{BNP} and \cite{LP}, where the same inequality is obtained.
\end{rmk}

The existence of low-rank elements in the kernel of $\psi_2: \bigwedge^2 H^0\left(X,\Omega_X^1\right) \ra H^0\left(X,\Omega_X^2\right)$ can be related to the existence of higher irrational pencils on $X$, and this will give us a more geometric hypothesis to apply Corollary \ref{cor1}.

\begin{lem} \label{lem1}
If $v \in \ker \psi_2$ has rank $2k > 0$, $k < d$, then there exists a higher irrational pencil $f:X \ra Y$ with $\dim Y \leq k$.
\end{lem}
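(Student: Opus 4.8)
The plan is to reduce the statement to the Generalized Castelnuovo--de Franchis theorem (Theorem \ref{GCdF}). That theorem produces a higher irrational pencil $f\colon X\ra Y$ with $\dim Y < n$ as soon as we exhibit $n$ linearly independent $1$-forms $\eta_1,\dots,\eta_n\in V$ with $\psi_n(\eta_1\wedge\cdots\wedge\eta_n)=0$. So it suffices to produce $k+1$ linearly independent $1$-forms killed by $\psi_{k+1}$: then $\dim Y < k+1$, i.e.\ $\dim Y\le k$. Note that the hypothesis $k<d$ guarantees $k+1\le d$, so $\psi_{k+1}$ is indeed defined.

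First I would fix a decomposition $v=v_1\wedge v_2+\cdots+v_{2k-1}\wedge v_{2k}$ with $v_1,\dots,v_{2k}$ linearly independent and set $W=\langle v_1,\dots,v_{2k}\rangle\subseteq V$, a $2k$-dimensional subspace on which $v$ is a \emph{non-degenerate} alternating bivector. The key observation is that the maps $\psi_n$ fit together into a homomorphism of graded algebras $\psi\colon\bigwedge^\bullet V\ra\bigoplus_{p\ge0}H^0(X,\Omega_X^p)$ (it is just wedging of forms), so $\ker\psi$ contains the whole ideal generated by $v$; concretely, $\psi_{k+1}(v\wedge u)=\psi_2(v)\wedge\psi_{k-1}(u)=0$ for every $u\in\bigwedge^{k-1}V$. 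Now, because $v$ is non-degenerate on the $2k$-dimensional space $W$, the Lefschetz-type operator $L_v=v\wedge(-)\colon\bigwedge^{k-1}W\ra\bigwedge^{k+1}W$ is an isomorphism (this is the elementary, linear-algebra avatar of Hard Lefschetz, coming from the $\mathfrak{sl}_2$-action on $\bigwedge^\bullet W$ attached to a symplectic form; the two spaces do have equal dimension $\binom{2k}{k-1}$). In particular $L_v$ is surjective, so every element of $\bigwedge^{k+1}W$ has the form $v\wedge u$ and hence lies in $\ker\psi$; that is, $\psi_{k+1}$ vanishes identically on $\bigwedge^{k+1}W$.

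To conclude I would simply pick $v_1,\dots,v_{k+1}$ (possible since $k\ge1$, so $k+1\le2k$): these are linearly independent elements of $W$ with $\psi_{k+1}(v_1\wedge\cdots\wedge v_{k+1})=0$, and Theorem \ref{GCdF} applied with $n=k+1$ gives a higher irrational pencil $f\colon X\ra Y$ with $\dim Y\le k$. The only nontrivial ingredient is the surjectivity of $L_v$ onto $\bigwedge^{k+1}W$; I expect that to be the main (though standard) point, and if one prefers to avoid invoking symplectic Hard Lefschetz one can instead check directly in a symplectic basis that $\bigwedge^{k+1}W$ is spanned by elements $v\wedge(\text{monomial})$, or even just that the single monomial $v_1\wedge\cdots\wedge v_{k+1}$ is such a combination, using the relations obtained by wedging the identity $\psi_2(v)=0$ with suitable $(k-1)$-forms.
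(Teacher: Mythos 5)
Your proof is correct, and its skeleton is the same as the paper's: reduce to Theorem \ref{GCdF} by exhibiting a decomposable element of $\bigwedge^{k+1}V$ killed by $\psi_{k+1}$, using that $\psi$ is multiplicative so the whole ideal generated by $v$ lies in $\ker\psi$. Where you differ is in how that decomposable element is produced. The paper simply wedges $v$ with the explicit $(k-1)$-vector $v_1\wedge v_3\wedge\cdots\wedge v_{2k-3}$: all summands of $v$ except the last die against it, so $v\wedge\left(v_1\wedge v_3\wedge\cdots\wedge v_{2k-3}\right)=\pm\, v_1\wedge v_3\wedge\cdots\wedge v_{2k-3}\wedge v_{2k-1}\wedge v_{2k}$ is already the required nonzero decomposable element in $\ker\psi_{k+1}$ -- a one-line computation needing no further input. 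You instead invoke the symplectic Hard Lefschetz isomorphism $L_v\colon\bigwedge^{k-1}W\to\bigwedge^{k+1}W$ to conclude that \emph{every} element of $\bigwedge^{k+1}W$, in particular $v_1\wedge\cdots\wedge v_{k+1}$, lies in the ideal $(v)$. That is a valid (and standard) fact, and it buys you a stronger conclusion, namely that $\psi_{k+1}$ vanishes identically on $\bigwedge^{k+1}W$; but for the lemma it is more machinery than needed, and your own suggested fallback -- checking surjectivity monomial by monomial in a symplectic basis -- is essentially the paper's explicit choice. All the boundary checks you make ($k+1\le d$ from $k<d$, $k+1\le 2k$ from $k\ge 1$, and $\dim Y<k+1$ giving $\dim Y\le k$) match the paper's use of Theorem \ref{GCdF}.
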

\begin{proof}
The proof relies on Theorem \ref{GCdF}. By this theorem, it suffices to find a decomposable element $v_1 \wedge \cdots \wedge v_{k+1}$ in the kernel of $\psi_{k+1}$. Writing $v=v_1 \wedge v_2 + \ldots v_{2k-1} \wedge v_{2k}$ with the $v_i$ linearly independent, it is immediate that the element $v_1 \wedge v_3 \wedge \ldots \wedge v_{2k-1} \wedge v_{2k}$, obtained by wedging $v$ with $v_1 \wedge v_3 \wedge \ldots \wedge v_{2k-3}$, maps to zero by $\psi_{k+1}$ because $\psi_2\left(v\right)=0$.
\end{proof}

We immediately obtain the next

\begin{cor} \label{cor2}
If $X$ does not admit any irrational pencil, then
$$h^{2,0}\left(X\right) \geq 2rq - \binom{2r+1}{2}$$
for all $1 \leq r \leq \min\left\{\frac{q}{2},\dim X-1\right\}$.
\end{cor}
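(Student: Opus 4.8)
The plan is to deduce Corollary \ref{cor2} by combining Corollary \ref{cor1} with Lemma \ref{lem1} in a purely formal way. Recall that Corollary \ref{cor1} gives the inequality $h^{2,0}\left(X\right) \geq 2rq - \binom{2r+1}{2}$ for all $1 \leq r \leq k$ under the hypothesis that $\ker\psi_2$ contains no element of rank $2k \leq q$. So the only thing to check is that the geometric assumption ``$X$ admits no irrational pencil'' implies the algebraic assumption ``$\ker\psi_2$ contains no element of rank $2k$'' for the appropriate range of $k$.

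First I would fix the target value $k_0 = \min\left\{\frac{q}{2}, \dim X - 1\right\}$ and argue by contradiction: suppose $\ker\psi_2$ contains a non-zero element $v$ of some rank $2k$ with $1 \leq k \leq k_0$. Since $k \leq k_0 \leq \dim X - 1$, we have $k < d$, so Lemma \ref{lem1} applies and produces a higher irrational pencil $f : X \to Y$ with $\dim Y \leq k$. In particular $Y$ is of Albanese general type, hence irregular, so $f$ is an irregular pencil — contradicting the hypothesis. (Here I am using that a higher irrational pencil is in particular an irregular fibration, which is immediate from the definitions in the excerpt, since Albanese general type varieties are irregular.) Therefore $\ker\psi_2$ has no element of rank $2k$ for any $k$ in the range $1 \leq k \leq k_0$, and a fortiori none of rank exactly $2k_0$ (if $2k_0 \leq q$, which holds since $k_0 \leq q/2$).

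Having established the algebraic hypothesis, I would then simply invoke Corollary \ref{cor1} with $k = k_0$: it yields $h^{2,0}\left(X\right) \geq 2rq - \binom{2r+1}{2}$ for all $1 \leq r \leq k_0 = \min\left\{\frac{q}{2}, \dim X - 1\right\}$, which is exactly the assertion of Corollary \ref{cor2}. One small bookkeeping point: if $q$ is odd then $k_0$ might be $\frac{q}{2}$ which is not an integer, so strictly speaking the range is $1 \leq r \leq \lfloor q/2 \rfloor$ (equivalently $2r \leq q$); I would phrase the contradiction step so that it rules out all integers $k$ with $2k \leq q$ and $k \leq d-1$, which is precisely what Corollary \ref{cor1} needs.

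There is essentially no hard step here: the content is entirely in Theorem \ref{thm_c22} and Lemma \ref{lem1}, both already proven, and Corollary \ref{cor2} is a routine packaging of them. The only place where one must be slightly careful is the logical direction — one needs the \emph{absence} of irrational pencils to give the \emph{absence} of low-rank kernel elements, which is the contrapositive of Lemma \ref{lem1}, together with the trivial observation that a higher irrational pencil is an irregular fibration. So I expect the ``main obstacle'' to be nothing more than stating the index range cleanly and noting why ``no irrational pencil'' is (for this purpose) at least as strong as ``no higher irrational pencil''.
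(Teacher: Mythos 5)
Your proposal is correct and follows exactly the paper's argument: the paper also proves this corollary by observing that Lemma \ref{lem1} (in contrapositive form) lets one apply Corollary \ref{cor1} for every $k \leq \dim X - 1$. Your extra remarks about the integer range when $q$ is odd and about a higher irrational pencil being in particular an irregular fibration are just careful bookkeeping of the same one-line deduction.
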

\begin{proof}
Simply observe that Lemma \ref{lem1} allows us to apply Corollary \ref{cor1} for any $k \leq \dim X - 1$.
\end{proof}

\begin{rmk}
If $q \leq 2d-1$, then Corollary \ref{cor2} gives $h^{2,0}\left(X\right) \geq 2rq - \binom{2r+1}{2}$ for every $1 \leq r \leq \frac{q}{2}$. In this case, the right-hand-side of the inequality attains its maximum for $r = \left\lfloor\frac{q}{2}\right\rfloor$, giving
$$h^{2,0}\left(X\right) \geq \binom{q\left(X\right)}{2},$$
as was already obtained by Causin and Pirola in \cite{CP}.

However, if $q \geq 2d$, the maximum is attained for $r = d-1$, and we obtain
$$h^{2,0}\left(X\right) \geq 2\left(\dim X-1\right)q\left(X\right) - \binom{2\dim X-1}{2},$$
which coincides with the classical Castelnuovo-de Franchis inequality for surfaces without irrational pencils. Moreover, this result says that for fixed dimension and big irregularity, $h^{2,0}$ behaves asymptotically at least as $2\left(d-1\right)q$. For threefolds, this bound coincides with the one proven (with slightly more restrictive hypothesis) by Lombardi in \cite{Lom}, but improves his results in dimension four.
\end{rmk}

\bibliography{biblio}
\bibliographystyle{abbrv}

\end{document}